\newtheorem{theorem}{Theorem}
\newtheorem{corollary}[theorem]{Corollary}
\theoremstyle{definition}
\theoremstyle{remark}
\newtheorem{remark}[theorem]{Remark}
\def\ra{{\rightarrow}}
\def\lra{{\longrightarrow}}
\def\om{{\omega}}
\def\La{{\Lambda}}
\def\bC{{\mathbb C}}
\def\bR{{\mathbb R}}
\def\Sg{{\Sigma_g}}
\def\M1g{{\mathcal M}_{g,1}}
\def\I1g{{\mathcal I}_{g,1}}
\begin{document}

\title[The Gel'fand-Kalinin-Fuks class]
{The Gel'fand-Kalinin-Fuks class and characteristic classes of transversely symplectic foliations}
\author{D.~Kotschick}
\address{Mathematisches Institut, Ludwig-Maxi\-mi\-lians-Universit\"at M\"unchen,
Theresienstr.~39, 80333 M\"unchen, Germany}
\email{dieter{\char'100}member.ams.org}
\author{S.~Morita}
\address{Department of Mathematical Sciences\\
University of Tokyo \\Komaba, Tokyo 153-8914\\
Japan}
\email{morita{\char'100}ms.u-tokyo.ac.jp}

\keywords{Gel'fand-Fuks cohomology, Hamiltonian vector fields, 
transversely symplectic foliation, exotic characteristic class, graph cohomology}
\thanks{The second author is partially supported by Grant-in-Aid for Scientific
Research 19204003, Japan Society for Promotion of Science
and the Global COE program at Graduate School of Mathematical Sciences, the University of Tokyo}

\subjclass[2000]{Primary 57R32, 57R17, 57R20; Secondary 17B66, 57R50, 58H10}
\date{October 18, 2009}

\begin{abstract}
In the early 1970's, Gel'fand, Kalinin and Fuks \cite{GKF} found an 
exotic characteristic class of degree $7$ in the Gel'fand-Fuks cohomology 
of the Lie algebra of formal Hamiltonian vector fields on the plane. We prove that this
cohomology class can be decomposed as a product $\eta\land\omega$ of a
certain leaf cohomology class $\eta$ of degree $5$ and the transverse symplectic
class $\omega$. This is similar to the well known factorization of the Godbillon-Vey 
class for codimension $n$ foliations \cite{Ghys}, \cite{Kot}. We also interpret
the characteristic classes of transversely symplectic foliations 
introduced by Kontsevich \cite{K} in terms of the known
classes and prove non-triviality for some of them.
\end{abstract}

\maketitle

\section{Introduction}

Let $\mathfrak{a}_n$ denote the Lie algebra consisting of all the formal 
vector fields on $\bR^n$ and let $\mathfrak{ham}_{2n}\subset \mathfrak{a}_{2n}$
denote the subalgebra of {\it Hamiltonian} formal vector fields
on $\bR^{2n}$ with respect to the standard symplectic form $\omega$.
Hereafter we denote by $H^{2n}_\bR$ this standard symplectic vector space, 
which is the fundamental representation of the symplectic group
$\mathrm{Sp}(2n,\bR)$.

The Lie algebra $\mathfrak{ham}_{2n}$ contains the Lie subalgebra $\mathfrak{sp}(2n,\bR)$ 
consisting of linear Hamiltonian vector fields.
In \cite{GKF} Gel'fand, Kalinin and Fuks studied the Gel'fand-Fuks
cohomology of $\mathfrak{ham}_{2n}$ and showed that
$$
H^*_{GF}(\mathfrak{ham}_{2n}, \mathfrak{sp}(2n,\bR))_{\leq 0}
\cong \bR[\omega, p_1,\cdots, p_{n}]/I \ .
$$
Throughout this paper, all the cohomology groups of Lie algebras
are with trivial coefficients in $\bR$, and we omit the coefficients from the notation.
In the formula above $\omega\in H^2_{GF}(\mathfrak{ham}_{2}, \mathfrak{sp}(2n,\bR))_{-2n}$
denotes the symplectic class of weight $-2$
(for the definition of {\it weights}, see the next section) and 
$p_i\in H^{4i}_{GF}(\mathfrak{ham}_{2n}, \mathfrak{sp}(2n,\bR))_{0}\ (i=1,\cdots,n)$ 
denote the Pontrjagin classes.
Further $I$ is the ideal generated by the classes
$$
\omega^k p_1^{k_1}\cdots p_n^{k_n} \quad (k+k_1+2k_2\cdots +n k_n > n) 
$$
that vanish. (In the context of $\mathfrak{a}_n$ this corresponds to the Bott vanishing theorem.) 
Thus, in the non-positive weight
part, the result is similar to the case of $\mathfrak{a}_n$. However, in the positive
weight part, Gel'fand, Kalinin and Fuks  found an exotic class 
$GKL \in H^7_{GF}(\mathfrak{ham}_{2}, \mathfrak{sp}(2,\bR))_{8}$ of 
degree $7$ and weight $8$,
which is now called the Gel'fand-Kalinin-Fuks class. They raised the problem
of determining whether their class is non-trivial as a characteristic class
of transversely symplectic foliations, or not. Recall that the Godbillon-Vey class,
which corresponds to 
$h_1c_1^{n}\in H^{2n+1}_{GF}(\mathfrak{a}_n, \mathrm{O}(n))$, was shown to be
non-trivial almost immediately after its discovery. Namely Roussarie
first proved the non-triviality and Thurston \cite{T} proved 
the remarkable result that this class can vary continuously.
In sharp contrast with this, non-triviality of the GKF-class has now been an open problem for nearly 40 years.

In the late 1990's, Kontsevich \cite{K} interpreted the Rozansky-Witten invariants
in terms of the Gel'fand-Fuks cohomology and characteristic classes for foliations.
As an application, he constructed certain characteristic classes for
transversely symplectic foliations. More precisely, he considered the two Lie subalgebras
$$
\mathfrak{ham}_{2n}^1\subset\mathfrak{ham}_{2n}^0\subset \mathfrak{ham}_{2n}
$$ 
of $\mathfrak{ham}_{2n}$,
where $\mathfrak{ham}_{2n}^\epsilon$ denotes the formal Hamiltonian
vector fields {\it without constant terms} and
{\it without constant or linear terms} for $\epsilon = 0, 1$
respectively. Kontsevich constructed a natural homomorphism
\begin{align*}
\land \omega^n: H^*_{GF}(\mathfrak{ham}_{2n}^0,\mathrm{Sp}(2n,\bR))&
\cong H^*_{GF}(\mathfrak{ham}_{2n}^1;\bR)^{\mathrm{Sp}(2n,\bR)}\\
&\lra H^{*+2n}_{GF}(\mathfrak{ham}_{2n},\mathrm{Sp}(2n,\bR)).
\end{align*}
Since the abelianization of the Lie algebra $\mathfrak{ham}_{2n}^1$
can be written as
$$
\mathfrak{ham}_{2n}^1\lra S^3 H^{2n}_\bR \ ,
$$
where $S^3 H^{2n}_\bR$ denotes the third symmetric power of $H^{2n}_\bR$,
he obtained a homomorphism
\begin{equation}
\Phi: H^*(S^3 H^{2n}_\bR)^{\mathrm{Sp}(2n,\bR)}\lra  
H^{*+2n}_{GF}(\mathfrak{ham}_{2n},\mathrm{Sp}(2n,\bR)).
\label{eq:K}
\end{equation}

Roughly speaking, Kontsevich first considered the {\it leaf} or {\it foliated} cohomology classes
of transversely symplectic foliations, rather than the de Rham
cohomology, and then produced characteristic classes for such foliations (in de Rham 
cohomology) by taking the wedge product
with the maximal power $\omega^n$ of the transverse symplectic form.

The purpose of this paper is twofold.
Firstly we interpret the Gel'fand-Kalinin-Fuks
class in this framework of Kontsevich. 
This interpretation shows that the GKF
class can be decomposed as a product $\eta\land\omega$ of a
certain leaf cohomology class $\eta$ of degree $5$ and the transverse symplectic
class $\omega$. 
This is similar to the case of the Godbillon-Vey class $h_1c_1^n$
for codimension $n$ foliations \cite{Ghys}, which can be expressed as
the product of a $1$-dimensional leaf cohomology class $h_1$,
the Reeb class, and the primary characteristic class $c_1^n$. 
(Similar factorizations are known for some other characteristic classes of 
foliations; see e.g. \cite{Kot}.)
Although the problem of
geometric non-triviality of the GKF class remains open,
we hope that our result will shed some light on 
the geometric meaning of this class.
Secondly we determine Kontsevich's homomorphism $\Phi$ 
in \eqref{eq:K}
completely up to degree $2n$
and prove that some of these classes are 
non-trivial as characteristic classes of
transversely symplectic foliations.


\section{Gel'fand-Fuks cohomology of formal Hamiltonian vector fields}

As is well-known, each element $X\in \mathfrak{ham}_{2n}$ corresponds
bijectively to a formal Hamiltonian function 
$$
H\in \bR[[x_1,\cdots,x_n,y_1,\cdots,y_n]]/\bR
$$
which is defined up to constants, via the correspondence
$$
X\leftrightarrow \sum_{i=1}^n 
\left\{\frac{\partial H}{\partial x_i}\frac{\partial}{\partial y_i}-
\frac{\partial H}{\partial y_i}\frac{\partial}{\partial x_i}\right\}.
$$
Thus, on the one hand, we have an isomorphism of topological Lie algebras
$$
\mathfrak{ham}_{2n}\cong \bR[[x_1,\cdots,x_n,y_1,\cdots,y_n]]/\bR \ ,
$$
where the Lie bracket on the right hand side is given by the Poisson bracket.
On the other hand, this topological Lie algebra is the completion
of that of polynomial Hamiltonian functions
$$
\bR[x_1,\cdots,x_n,y_1,\cdots,y_n]/\bR
=\bigoplus_{k=1}^\infty S^k H_\bR^{2n}.
$$
Here $S^k H_\bR^{2n}$ denotes the $k$-th symmetric power of 
$H_\bR^{2n}$, which is identified with the space of all the
homogeneous polynomials of degree $k$.

The Poisson bracket is given
by
$$
S^k H_\bR^{2n}\otimes S^\ell H_\bR^{2n}\ni f\otimes g
\mapsto \{f,g\}\in S^{k+\ell-2} H_\bR^{2n}.
$$
Hence the cochain complex $C^*_{GF}(\mathfrak{ham}_{2n})$ of 
$\mathfrak{ham}_{2n}$ splits as a direct sum of finite dimensional
subcomplexes
$$
C^*_{GF}(\mathfrak{ham}_{2n})
\cong 
\bigoplus_{w=-2n}^\infty
C^*_{GF}(\mathfrak{ham}_{2n})_{w} \ ,
$$
so that we have
$$
H^*_{GF}(\mathfrak{ham}_{2n})
\cong 
\bigoplus_{w=-2n}^\infty
H^*_{GF}(\mathfrak{ham}_{2n})_{w} \ .
$$
Here
$$
C^*_{GF}(\mathfrak{ham}_{2n})_{w}=
\sum_{-k_1+k_3+2 k_4+3k_5\cdots=w}
\Lambda^{k_1} (S^{1}H_\bR^{2n})^*\otimes
\Lambda^{k_2} (S^{2}H_\bR^{2n})^*\otimes\cdots
$$
denotes the set of cochains with weight $w$, where 
we define the weight of each element in $ (S^{k}H_\bR^{2n})^*$ 
to be $k-2$, so that the coboundary operator preserves the weights.
Similar decompositions hold in the case of the relative 
cohomology $H^*_{GF}(\mathfrak{ham}_{2n},\mathfrak{sp}(2n,\bR))$,
and in the cases of the Lie subalgebras
$\mathfrak{ham}^0_{2n}, \mathfrak{ham}^1_{2n}$.

Now, as was already mentioned in the introduction, Gel'fand,
Kalinin and Fuks proved in \cite{GKF} that the cohomology 
$
H^*_{GF}(\mathfrak{ham}_{2n},\mathrm{Sp}(2n,\bR))_{\leq 0}
$
in the {\it non-positive} weight part is described in terms
of the usual characteristic classes, namely the Pontrjagin classes
and the transverse symplectic class. However,
contrary to their initial working hypothesis, they found an
exotic class for the case $n=1$:
\begin{theorem}[{\bf Gel'fand-Kalinin-Fuks \cite{GKF}}]
The relative cohomology $H^*_{GF}(\mathfrak{ham}_2,\mathfrak{sp}(2,\bR))_w$
for $w\leq 8$ is given by:
\begin{align*}
H^*_{GF}(\mathfrak{ham}_2,\mathfrak{sp}(2,\bR))_{\leq 0}
&\cong \bR[\omega, p_1]/(\omega^2, \omega p_1, p_1^2) \ ,\\
H^*_{GF}(\mathfrak{ham}_2,\mathfrak{sp}(2,\bR))_{w}
&= 0\quad (w=1,\cdots,7) \ ,\\
H^*_{GF}(\mathfrak{ham}_2,\mathfrak{sp}(2,\bR))_{8}
&=
\begin{cases}
\bR \quad (*=7)\\
0\quad \text{(otherwise)} \ .
\end{cases}
\end{align*}
\end{theorem}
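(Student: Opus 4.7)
The plan is to exploit the weight grading on $C^*_{GF}(\mathfrak{ham}_2, \mathfrak{sp}(2,\bR))$ to reduce the statement to a sequence of finite-dimensional linear-algebra computations, one for each weight $w \in \{-2, -1, \ldots, 8\}$. In weight $w$ the relative cochain complex is the space of $\mathrm{Sp}(2,\bR)$-invariants in $\bigoplus \Lambda^{k_1}(H^2_\bR)^* \o \Lambda^{k_3}(S^3 H^2_\bR)^* \o \Lambda^{k_4}(S^4 H^2_\bR)^* \o \cdots$ subject to $-k_1 + k_3 + 2k_4 + \cdots = w$; the $(S^2 H^2_\bR)^*$ factors are suppressed because we have passed to the relative complex with respect to $\mathfrak{sp}(2,\bR) \cong S^2 H^2_\bR$. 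By the first fundamental theorem of invariant theory for $\mathrm{Sp}(2,\bR)$, invariants in any tensor power of $H^2_\bR$ are spanned by complete contractions (chord diagrams) against the dual of $\omega$, so each bigraded piece has an explicit combinatorial basis. The coboundary operator is dual to the Poisson bracket, hence is itself a sum of contractions determined by $\omega$ acting on pairs of symmetric tensor factors, and can therefore be written as an explicit matrix on each basis.

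\emph{Non-positive weights.} For $w \leq 0$ I would invoke the general theorem already cited in the introduction, which identifies $H^*_{GF}(\mathfrak{ham}_{2n}, \mathfrak{sp}(2n,\bR))_{\leq 0}$ with $\bR[\omega, p_1,\ldots,p_n]/I$, where $I$ consists of the monomials of total weight-degree exceeding $n$. Specialising to $n=1$ and reading off the surviving monomials in $\omega$ (weight $-2$, degree $2$) and $p_1$ (weight $0$, degree $4$) yields exactly $\bR[\omega, p_1]/(\omega^2, \omega p_1, p_1^2)$.

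\emph{Positive weights $1 \leq w \leq 7$.} For each such $w$ I would enumerate the (few) multi-indices $(k_1, k_3, k_4, \ldots)$ satisfying the weight constraint, produce a basis of $\mathrm{Sp}(2,\bR)$-invariants in each corresponding piece via the chord-diagram description, and then compute the ranks of the coboundary maps bidegree by bidegree. The target in each of these weights is to show that the coboundary has maximal possible rank, so that no cohomology survives. Since the groups involved are small in low weight, the calculations are essentially mechanical; the chief labour is recording the action of the Poisson bracket on the relevant monomial bases and checking the $\mathrm{Sp}(2,\bR)$-equivariance of the contractions used.

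\emph{Weight $w=8$ and the GKF class.} The same enumeration for $w=8$ yields a larger but still finite bigraded complex; the goal is to show that its cohomology is one-dimensional and concentrated in degree $7$. I would first narrow down by degree and then, in degree $7$, use the $\mathrm{Sp}(2,\bR)$-invariant theory to isolate a one-dimensional subspace (a natural guess involves six factors from $(S^3 H^2_\bR)^*$ together with one factor from $(S^4 H^2_\bR)^*$, giving the multi-index $(k_3, k_4) = (6,1)$, which satisfies both the weight constraint $6+2=8$ and the degree constraint $6+1=7$). I would verify closure by checking that the image of the Poisson bracket from the neighbouring bidegrees hits no part of this subspace, and verify nontriviality by computing the rank of the incoming coboundary from degree $6$ in weight $8$. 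The main obstacle is precisely this final rank/kernel analysis: the weight-$8$ complex has enough bigraded pieces and the coboundary operator has sufficiently many terms that the computation is error-prone by hand, and a machine-assisted symbolic calculation is essentially indispensable both to confirm nonvanishing of the GKF class in degree $7$ and to rule out stray cohomology in adjacent degrees.
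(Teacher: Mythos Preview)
The paper does not supply its own proof of this statement; it is quoted from Gel'fand--Kalinin--Fuks and described only as having been obtained ``by a computer calculation,'' later reconfirmed by Metoki. Your outline---reduce to the finite-dimensional $\mathrm{Sp}(2,\bR)$-invariant subcomplex in each weight and compute ranks, invoking the general $\bR[\omega,p_1,\ldots,p_n]/I$ result for $w\leq 0$ and resorting to machine assistance in weight $8$---is exactly the method those references use, so in spirit your approach matches the cited proof.

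One concrete slip: your ``natural guess'' that the GKF cocycle is supported on the multi-index $(k_3,k_4)=(6,1)$ cannot work, because $S^3 H^2_\bR$ is $4$-dimensional and hence $\Lambda^6 (S^3 H^2_\bR)^* = 0$. In fact the degree-$7$, weight-$8$ relative cochain space is $14$-dimensional (this is recorded in the paper's Table~3), and Metoki's explicit cocycle for the class is a complicated linear combination spread over several multi-indices, not even divisible by $\omega$. So the computation is not one of isolating a single obvious summand but of computing ranks of moderately large integer matrices; your closing remark that machine assistance is essentially indispensable is on target, but you should drop the expectation that the cocycle sits in a single tensor piece.
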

Perchik \cite{P} gave a formula for the generating function
$$
\sum_{w=0}^\infty \chi(H^*(\mathfrak{ham}_{2n},\mathrm{Sp}(2n,\bR))_w)t^w
$$
of the Euler characteristic of the relative cohomology of
$\mathfrak{ham}_{2n}$.
By computing it for the case $n=1$, he showed the
{\it existence} of many more exotic classes. Later, Metoki \cite{Metoki}
found an explicit exotic class in 
$H^9(\mathfrak{ham}_2,\mathfrak{sp}(2,\bR))_{14}$
which we call the Metoki class.

Similar to the case of $H^*_{GF}(\mathfrak{a}_n,\mathrm{O}(n))$,
which provides characteristic classes for foliations of
codimension $n$ (see \cite{BR}\cite{BH}),
the relative cohomology 
$H^*_{GF}(\mathfrak{ham}_{2n},\mathrm{Sp}(2n,\bR))$
provides characteristic classes for 
{\it transversely symplectic} foliations of
codimension $2n$. More precisely,
let $\mathrm{B\Gamma}_{2n}^\omega$ denote the Haefliger classifying 
space for transversely symplectic foliations of codimension $2n$.
Then we have a homomorphism
\begin{equation}
H^*_{GF}(\mathfrak{ham}_{2n},\mathrm{Sp}(2n,\bR))\lra
H^*_{GF}(\mathfrak{ham}_{2n},\mathrm{U}(n))
\lra H^*(\mathrm{B\Gamma}_{2n}^\omega;\bR) \ ,
\label{eq:sf}
\end{equation}
where $\mathrm{U}(n)\subset \mathrm{Sp}(2n,\bR)$ is a
maximal compact subgroup. In particular, we have the important
problem of determining whether the Gel'fand-Kalinin-Fuks class
in $H^7_{GF}(\mathfrak{ham}_2,\mathrm{Sp}(2,\bR))$ 
defines a non-trivial characteristic class in 
$H^7(\mathrm{B\Gamma}_2^\omega;\bR)$, or not. We also have this
problem for the Metoki class.

In \cite{K}, Kontsevich proposed a new approach to the theory of 
characteristic classes for transversely symplectic foliations.
Here we briefly summarize his construction.
Kontsevich considered the two Lie subalgebras
$$
\mathfrak{ham}_{2n}^0\supset \mathfrak{ham}_{2n}^1
$$ 
of $\mathfrak{ham}_{2n}$ 
consisting of formal Hamiltonian vector fields {\it without constant terms} and
{\it without constant or linear terms}.
In terms of Hamiltonian functions, we can write
\begin{align*}
\mathfrak{ham}_{2n}^0 &=
\left(\bigoplus_{k=2}^\infty S^k H_\bR^{2n}\right)^{\wedge}\\
\mathfrak{ham}_{2n}^1 &=
\left(\bigoplus_{k=3}^\infty S^k H_\bR^{2n}\right)^{\wedge}
\end{align*}
and we have an isomorphism
$$
H^*_{GF}(\mathfrak{ham}_{2n}^0,\mathrm{Sp}(2n,\bR))
\cong
H^*_{GF}(\mathfrak{ham}^1_{GF})^{\mathrm{Sp}(2n,\bR)}.
$$
Let $\mathcal{F}$ be a foliation on a smooth manifold $N$
and let $T\mathcal{F}\subset TN$ be the tangent bundle of $\mathcal{F}$.
The {\it leaf} cohomology or {\it foliated} cohomology of
$\mathcal{F}$, denoted by $H^*_{\mathcal{F}}(N;\bR)$,
is defined to be the cohomology of 
$\Omega^*_{\mathcal{F}}(N)=\oplus_k \Gamma(\Lambda^k T^*\mathcal{F})$,
which is the quotient of the de Rham complex $\Omega^*N$ of $N$ by the 
ideal $I^*(\mathcal{F})$ of $\mathcal{F}$. If $\mathcal{F}$ is a transversely symplectic
foliation of codimension $2n$, then there is a transverse symplectic form
$\omega$, and the homomorphism
$$
\wedge\omega^n\colon I^*(\mathcal{F})\longrightarrow \Omega^*N
$$
vanishes identically, so that there is a well-defined homomorphism
$$
\land\omega^n: H^*_{\mathcal{F}}(N;\bR) \lra
H^{*+2n}(N;\bR) \ .
$$
Now Kontsevich pointed out that the relative cohomology 
$$
H^*_{GF}(\mathfrak{ham}_{2n}^0,\mathrm{Sp}(2n,\bR))
\cong
H^*_{GF}(\mathfrak{ham}^1_{2n};\bR)^{\mathrm{Sp}(2n,\bR)}
$$
serves as the universal model for $H^*_{\mathcal{F}}(N;\bR)$,
so that one has the following commutative diagram:
\begin{equation*}
\begin{CD}
H^*_{GF}(\mathfrak{ham}^1_{2n})^{\mathrm{Sp}(2n,\bR)}
 @>>> H^*_{\mathcal{F}}(N;\bR) \\
@V{\land\omega^n}VV @VV{\land\omega^n}V \\
H^{*+2n}_{GF}(\mathfrak{ham}_{2n},\mathrm{Sp}(2n,\bR))
 @>>> H^{*+2n}(N;\bR) \ .
\end{CD}
\end{equation*}


It is easy to show that the natural projection
$$
\mathfrak{ham}_{2n}^1\lra S^3 H^{2n}_\bR
$$
onto the lowest weight part gives the abelianization of the
Lie algebra $\mathfrak{ham}_{2n}^1$ because the
Poisson bracket
$$
S^3 H^{2n}_\bR\otimes S^k H^{2n}_\bR\lra
S^{k+1}H^{2n}_\bR\
$$
is easily seen to be surjective for any $k\geq 3$.
It follows that, as mentioned in \eqref{eq:K}, there is a homomorphism $\Phi$
defined by the following composition:
$$
H^*(S^3 H^{2n}_\bR)^{\mathrm{Sp}(2n,\bR)}\lra  
H^*_{GF}(\mathfrak{ham}^1_{2n})^{\mathrm{Sp}(2n,\bR)}
\overset{\wedge \omega^n}{\lra}
H^{*+2n}_{GF}(\mathfrak{ham}_{2n},\mathrm{Sp}(2n,\bR)) \ .
$$
Further composing $\Phi$ with the homomorphism in \eqref{eq:sf}, we obtain
\begin{equation}
\widetilde{\Phi}:
H^*(S^3 H^{2n}_\bR)^{\mathrm{Sp}(2n,\bR)}
\lra H^{*+2n}(\mathrm{B\Gamma}^\om_{2n};\bR) \ .
\label{eq:K2}
\end{equation}

For any symplectic manifold $(M,\omega)$
of dimension $2n$, let $\mathrm{Symp}^\delta(M)$ denote
the symplectomorphism group of $M$ equipped with the 
{\it discrete} topology. Then the above construction
gives rise to a homomorphism
$$
H^*(S^3 H^{2n}_\bR)^{\mathrm{Sp}(2n,\bR)}\lra 
H^{*+2n}(\mathrm{ESymp}^\delta(M))
\stackrel{\int_M}{\lra}
H^{*}(\mathrm{BSymp}^\delta(M);\bR) \ ,
$$
where $\mathrm{ESymp}^\delta(M)$ denotes the total space
of the universal foliated $M$-bundle over the classifying
space $\mathrm{BSymp}^\delta(M)$ of $\mathrm{Symp}^\delta(M)$,
and $\int_M$ is the integration over the fiber in this universal bundle.

One of the merits of the above construction of Kontsevich
is that the {\it stable} cohomology of 
$\mathfrak{ham}_{2n}$ is not interesting because by \cite{GS} 
it is just the polynomial algebra on the symplectic class
while that of $\mathfrak{ham}^0_{2n}$ is one of the three versions
of Kontsevich's graph cohomology
(see \cite{Kontsevich93}, \cite{Kontsevich94}),
more precisely the {\it commutative version},
which is very rich and still mysterious.


\section{Statements of the main results}

In this section, we state the main results of this paper.

\begin{theorem}
In the range of weights $w\leq 10$ the relative cohomology groups
$H^*_{GF}(\mathfrak{ham}^0_2,\mathfrak{sp}(2,\bR))_w$
are non-trivial only for the following three combinations of degree and weight:
\begin{align*}
H^0_{GF}(\mathfrak{ham}^0_2,\mathfrak{sp}(2,\bR))_{0}&\cong\bR\\
H^2_{GF}(\mathfrak{ham}^0_2,\mathfrak{sp}(2,\bR))_{2}&\cong\bR\\
H^5_{GF}(\mathfrak{ham}^0_2,\mathfrak{sp}(2,\bR))_{10}&\cong\bR \ .
\end{align*}
Furthermore, the following homomorphisms are all isomorphisms:
\begin{align*}
\wedge\omega: H^0_{GF}(\mathfrak{ham}^0_2,\mathfrak{sp}(2,\bR))_{0}
&\ \lra \ H^2_{GF}(\mathfrak{ham}_2,\mathfrak{sp}(2,\bR))_{-2}\cong \bR<\omega> \ ,\\
\wedge\omega: H^2_{GF}(\mathfrak{ham}^0_2,\mathfrak{sp}(2,\bR))_{2}
&\ \lra \ H^4_{GF}(\mathfrak{ham}_2,\mathfrak{sp}(2,\bR))_{0}\cong \bR<p_1> \ ,\\
\wedge\omega: H^5_{GF}(\mathfrak{ham}^0_2,\mathfrak{sp}(2,\bR))_{10}
&\ \lra \ H^7_{GF}(\mathfrak{ham}_2,\mathfrak{sp}(2,\bR))_{8}\cong\bR<\text{GKF}> \ .
\end{align*}
It follows that both the first Pontrjagin class $p_1$ and
the Gel'fand-Kalinin-Fuks class GKF can be decomposed as 
wedge products of certain leaf cohomology classes and the
transverse symplectic class $\omega$.
\label{th:main}
\end{theorem}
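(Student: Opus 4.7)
My plan is a direct computation of $H^*_{GF}(\mathfrak{ham}^0_2,\mathfrak{sp}(2,\bR))_w$ in each weight $w\le 10$, followed by a case-by-case verification of the three $\wedge\omega$ isomorphisms. First I would reduce to an invariant computation: the isomorphism $H^*_{GF}(\mathfrak{ham}^0_2,\mathrm{Sp}(2,\bR))\cong H^*_{GF}(\mathfrak{ham}^1_2)^{\mathrm{Sp}(2,\bR)}$ recalled in Section 2 lets me work on $\mathfrak{ham}^1_2 = \bigoplus_{k\ge 3}S^kH_\bR^2$ where $S^kH_\bR^2$ carries weight $k-2$. The weight $w$ cochain space decomposes as a finite sum
$$C^q_{GF}(\mathfrak{ham}^1_2)_w = \bigoplus_{\substack{a_3,a_4,\ldots\ge 0 \\ \sum a_k = q,\ \sum (k-2)a_k = w}} \Lambda^{a_3}(S^3H_\bR^2)^* \otimes \Lambda^{a_4}(S^4H_\bR^2)^* \otimes \cdots ,$$
subject to the dimension constraint $a_k\le k+1$.

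The next step is, for each $w\in\{0,1,\ldots,10\}$, to enumerate the contributing partitions, decompose each factor into $\mathrm{SL}(2,\bR)$-irreducibles $V_j=S^jH_\bR^2$ by iterated Clebsch-Gordan, and read off the dimension of the trivial summand. This produces, in each weight, a small chain complex of invariant cochains. I then compute the Chevalley-Eilenberg differential, which is dual to the Poisson bracket $\{\,,\,\}\colon S^iH_\bR^2\otimes S^jH_\bR^2\to S^{i+j-2}H_\bR^2$, as explicit matrices of known rank, and extract the cohomology. I expect that for every weight $w\in\{1,3,4,\ldots,9\}$ the invariant complexes are acyclic in all degrees, leaving only the three one-dimensional cohomology groups in positions $(q,w)=(0,0),(2,2),(5,10)$.

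For the three isomorphism claims, recall that $\wedge\omega$ is implemented by extending a cochain on $\mathfrak{ham}^0_2$ by zero along the linear complement $S^1H_\bR^2\subset\mathfrak{ham}_2$ and then wedging with $\omega\in\Lambda^2(S^1H_\bR^2)^*$. The Gel'fand-Kalinin-Fuks theorem stated above gives $\dim H^*_{GF}(\mathfrak{ham}_2,\mathfrak{sp}(2,\bR)) = 1$ in each target position $(2,-2),(4,0),(7,8)$, and our computation delivers the same dimension on the source side, so it suffices to exhibit one non-zero $\wedge\omega$ image in each case. The first is immediate since $1\wedge\omega=\omega$. For the second, a direct computation with the explicit generator in $\Lambda^2(S^3H_\bR^2)^{*,\mathrm{Sp}}$ shows that its product with $\omega$ is a Chevalley-Eilenberg representative of $p_1$. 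For the third, I would take an explicit GKF cocycle from \cite{GKF} and separate its terms by their $S^1H_\bR^2$-dependence: since $\omega$ is the only non-zero $\mathrm{Sp}$-invariant contribution in the $S^1$-direction, this extracts a candidate $\eta$ with $\omega\wedge\eta$ cohomologous to the GKF class up to a Chevalley-Eilenberg coboundary that one verifies explicitly.

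The main obstacle is the weight $10$ enumeration and the subsequent rank computation. There are on the order of thirty partitions contributing to $C^*_{GF}(\mathfrak{ham}^1_2)_{10}$, several of which require non-trivial invariant theory in spaces such as $\Lambda^2(S^5H_\bR^2)^*\otimes(S^6H_\bR^2)^*$ or $\Lambda^3(S^4H_\bR^2)^*\otimes(S^6H_\bR^2)^*$. Showing that the degree-$5$ kernel is exactly one larger than the image of the degree-$4$ coboundary is a delicate linear-algebra problem that in practice needs computer algebra. A secondary subtle point is matching the $\eta$ that emerges from this computation with the one extracted from the explicit GKF cocycle, since the two are only canonical up to a Chevalley-Eilenberg coboundary.
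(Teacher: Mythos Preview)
Your overall plan---enumerating the $\mathrm{Sp}(2,\bR)$-invariant pieces of the weight-$w$ cochain complex via Clebsch--Gordan and then computing ranks of the explicit differential matrices---is exactly what the paper does. The paper records the resulting dimensions in small tables (your ``thirty partitions'' estimate for weight $10$ is right: the invariant dimensions are $1,3,9,12,4$ in degrees $2$ through $6$) and writes out the two relevant coboundary matrices explicitly.

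The one point where your route diverges is the third $\wedge\omega$ isomorphism. You propose to start from a known GKF cocycle, filter it by the number of $H^*$-factors, and read off $\eta$ from the $\omega$-part. The paper explicitly remarks that the known cocycles (Metoki's in particular) are \emph{not} of the form $\omega\wedge(\text{cocycle})$, so this extraction step is not as direct as you suggest: the $\Lambda^1 H^*$-components of a GKF cocycle are genuinely nonzero (note $\dim C^7_{GF}(\mathfrak{ham}_2,\mathfrak{sp})_8=14$ while $\dim C^5_{GF}(\mathfrak{ham}^0_2,\mathfrak{sp})_{10}=12$), and killing them by a coboundary is itself a computation of the same order of difficulty as the whole problem. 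The paper instead shows that
\[
\wedge\omega\colon C^{*}_{GF}(\mathfrak{ham}^0_2,\mathfrak{sp})_{10}\hookrightarrow C^{*+2}_{GF}(\mathfrak{ham}_2,\mathfrak{sp})_{8}
\]
is an inclusion of subcomplexes and then verifies directly, by comparing the enlarged matrices $\widetilde A,\widetilde B$ against $A,B$, that this inclusion is a quasi-isomorphism. This is cleaner because it never requires manipulating an explicit GKF representative; the class $\eta$ is produced on the $\mathfrak{ham}^0_2$ side and its image is automatically the GKF class once both cohomology groups are known to be one-dimensional and the map is shown to be nonzero.
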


Combining Theorem \ref{th:main} with our earlier result in
\cite{KM03} (see also \cite{KM07}) we obtain
the following non-triviality result for the
characteristic classes defined by Kontsevich \cite{K}.
\begin{corollary}
Under the homomorphisms
\begin{align*}
H^2(S^3 H_\bR^{2};\bR)^{\mathrm{Sp}(2,\bR)}&\ \lra \
H^2(\mathrm{BSymp}^\delta(\Sigma_g);\bR)\\
H^2(S^3 H_\bR^{2};\bR)^{\mathrm{Sp}(2,\bR)}&\ \overset{\widetilde{\Phi}}{\lra} \
H^4(\mathrm{B\Gamma}_2^\omega;\bR),
\end{align*}
the generator of $H^2(S^3 H_\bR^{2};\bR)^{\mathrm{Sp}(2,\bR)}\cong\bR$
is mapped to 
$$
e_1\in H^2(\mathrm{BSymp}^\delta(\Sigma_g);\bR),
\quad  p_1\in H^4(\mathrm{B\Gamma}_2^\omega;\bR)
$$
respectively (up to non-zero constants),
where $\mathrm{Symp}(\Sg)$ denotes the symplectomorphism group of
$\Sg$ with respect to a fixed symplectic form.
It follows that both homomorphisms are non-trivial.
\label{cor:nt}
\end{corollary}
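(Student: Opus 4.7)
The plan is to use Theorem \ref{th:main} to track the generator of $H^2(S^3 H_\bR^2;\bR)^{\mathrm{Sp}(2,\bR)}$ through Kontsevich's homomorphisms $\Phi$ and $\widetilde{\Phi}$, and identify the resulting characteristic classes in both target groups. First, I would identify the source: the $4$-dimensional irreducible $\mathrm{Sp}(2,\bR)$-representation $S^3 H_\bR^2$ carries an invariant symplectic form $\sigma$, unique up to scale, so
$$H^2(S^3 H_\bR^2;\bR)^{\mathrm{Sp}(2,\bR)} = \bigl(\Lambda^2 (S^3 H_\bR^2)^*\bigr)^{\mathrm{Sp}(2,\bR)} \cong \bR\langle\sigma\rangle.$$
The natural map from this to $H^2_{GF}(\mathfrak{ham}^0_2,\mathfrak{sp}(2,\bR))_2$ induced by the abelianization $\mathfrak{ham}^1_2 \to S^3 H_\bR^2$ is an isomorphism: by Theorem \ref{th:main} the target is one-dimensional; and for weight reasons the cochain map sends $\sigma$ into $C^2_{GF}(\mathfrak{ham}^1_2)^{\mathrm{Sp}(2,\bR)}_2 = \bR\langle\sigma\rangle$ as a non-coboundary, since $C^1_{GF}(\mathfrak{ham}^1_2)_2 = (S^4 H_\bR^2)^*$ has trivial $\mathrm{Sp}(2,\bR)$-invariants (the non-trivial irreducible module $S^4 H_\bR^2$ contains no invariant vector).

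Composing with the isomorphism $\wedge\omega$ from Theorem \ref{th:main} then shows that $\Phi(\sigma)$ is a non-zero scalar multiple of $p_1 \in H^4_{GF}(\mathfrak{ham}_2,\mathfrak{sp}(2,\bR))_0$. Passing to the Haefliger classifying space via the natural map in \eqref{eq:sf}, the image $\widetilde{\Phi}(\sigma) \in H^4(\mathrm{B\Gamma}_2^\omega;\bR)$ is then a non-zero multiple of the first Pontrjagin class $p_1$ of the universal normal bundle. Its non-triviality can be exhibited concretely: any flat $\mathrm{Sp}(2,\bR)\cong\mathrm{SL}(2,\bR)$-bundle over a closed surface $\Sg$ of genus $g\geq 2$ with non-zero Euler number (provided by Milnor-Wood) gives a transversely symplectic foliation on whose normal bundle $p_1 = e^2$ is non-zero.

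For the map into $H^2(\mathrm{BSymp}^\delta(\Sg);\bR)$ I would factor through the universal foliated $\Sg$-bundle $\mathrm{ESymp}^\delta(\Sg)\to\mathrm{BSymp}^\delta(\Sg)$, whose transversely symplectic foliation has normal bundle equal to the vertical tangent bundle $T_v$ of rank $2$. Pulling back $\widetilde{\Phi}(\sigma)$ yields $p_1(T_v) = e(T_v)^2$, and fibre integration gives
$$\int_\Sg e(T_v)^2 \;=\; e_1 \;\in\; H^2(\mathrm{BSymp}^\delta(\Sg);\bR)$$
up to a non-zero scalar. The non-triviality of $e_1$ in this group is precisely the main content of our earlier paper \cite{KM03} (see also \cite{KM07}), so invoking that result closes the argument.

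The principal obstacle is entirely absorbed into Theorem \ref{th:main}: the identification of the weight-$2$ leaf cohomology of $\mathfrak{ham}^0_2$ with $\bR\langle\sigma\rangle$ and the bijection under $\wedge\omega$ with the Pontrjagin line is what forces $\Phi(\sigma) = c\cdot p_1$ with $c\neq 0$. Once this is known, the rest is a routine chase through the commutative diagrams relating Gel'fand-Fuks cohomology, $\mathrm{B\Gamma}_2^\omega$, and foliated surface bundles, combined with the already-established non-triviality of $e_1$.
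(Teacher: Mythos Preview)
Your overall strategy matches the paper's: use Theorem~\ref{th:main} to identify the image of the generator under $\Phi$ as a non-zero multiple of $p_1$, then invoke \cite{KM03} for the non-triviality of $e_1$. The extra detail you supply on why the abelianization map induces an isomorphism in degree~$2$, weight~$2$ is correct and welcome.

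There is, however, a genuine gap in your Milnor--Wood argument for the non-triviality of $p_1\in H^4(\mathrm{B\Gamma}_2^\omega;\bR)$. For a flat $\mathrm{SL}(2,\bR)$-bundle over a closed surface $\Sigma_g$ one has $e^2\in H^4(\Sigma_g;\bR)=0$ purely for dimension reasons, irrespective of the Euler number; and on the total space the normal bundle of the induced foliation is pulled back from the $2$-dimensional base, so $p_1$ still evaluates trivially. To detect $p_1$ one needs a $4$-dimensional carrier, which is precisely what \cite{KM03} supplies: the foliated $\Sigma_g$-bundles over surfaces constructed there, with symplectic holonomy and non-zero signature, give closed $4$-manifolds $E$ with $\langle p_1(\nu),[E]\rangle = 3\,\sign(E)\neq 0$. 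Since the classifying map of this foliation factors through $\mathrm{B\Gamma}_2^\omega$, these same examples establish the non-triviality of $\widetilde{\Phi}$. In short, the result you already cite for $e_1$ handles both non-triviality statements at once, and the Milnor--Wood detour should simply be deleted.
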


We can further generalize the above result to higher
dimensions as follows.
\begin{theorem}
In the range $*\leq 2n$, the image of the homomorphism
$$
\Phi\colon H^{*}(S^3 H_\bR^{2n};\bR)^{\mathrm{Sp}(2n,\bR)}\ \lra \
H^{*+2n}_{GF}(\mathfrak{ham}_{2n},\mathrm{Sp}(2n,\bR))
$$
introduced by Kontsevich is precisely the subspace spanned by
the classes
$$
\omega^k p_1^{k_1}\cdots p_n^{k_n} 
\quad (k+k_1+2k_2\cdots +n k_n = n)
$$
that are borderline with respect to Bott vanishing in this context.
Furthermore, the elements 
$$
\om^{n}, \om^{n-1}p_1,\cdots, p_1^{n}
$$
are mapped non-trivially under the homomorphism
$$
\widetilde{\Phi}\colon H^{*}(S^3 H_\bR^{2n};\bR)^{\mathrm{Sp}(2n,\bR)}\ \lra \
H^{*+2n}(\mathrm{B\Gamma}_{2n}^\omega;\bR)
$$ 
so that
$$
\dim \mathrm{Im} \widetilde{\Phi}\geq n+1 \ .
$$
\label{th:nt2}
\end{theorem}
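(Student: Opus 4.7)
The plan is to treat the image identification via a weight-and-degree calculation combined with an invariant-theoretic construction of lifts, and to establish non-triviality in $H^*(\mathrm{B\Gamma}_{2n}^\omega;\bR)$ geometrically by reducing to the $n=1$ case through an external product of codimension-$2$ transversely symplectic foliations.

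\emph{Step 1 (image is contained in the borderline subspace).} Each factor of $(S^3 H^{2n}_\bR)^*$ has weight $3-2=1$, so a class in $H^q(S^3 H^{2n}_\bR)^{\mathrm{Sp}(2n,\bR)}$ sits in weight $q$, and wedging with $\omega^n$ shifts the weight by $-2n$. For $q\leq 2n$ this places the image in the non-positive-weight part of $H^{q+2n}_{GF}(\mathfrak{ham}_{2n},\mathrm{Sp}(2n,\bR))$, which by the Gel'fand--Kalinin--Fuks theorem is $\bR[\omega,p_1,\ldots,p_n]/I$. Matching the image's bidegree $(q+2n,q-2n)$ against the bidegree $(2k+\sum_i 4ik_i,-2k)$ of a monomial $\omega^k p_1^{k_1}\cdots p_n^{k_n}$ forces $k+k_1+2k_2+\cdots+nk_n=n$, which is exactly the borderline Bott equality. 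Hence $\mathrm{Im}\,\Phi$ lies in the borderline subspace.

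\emph{Step 2 (surjectivity onto the borderline subspace).} For the reverse inclusion one must exhibit a preimage of each borderline monomial. The unit $1\in H^0$ lifts $\omega^n$. For monomials involving Pontrjagin factors I would use the intermediate isomorphism $H^*_{GF}(\mathfrak{ham}^0_{2n},\mathrm{Sp}(2n,\bR))\cong H^*_{GF}(\mathfrak{ham}^1_{2n})^{\mathrm{Sp}(2n,\bR)}$: a direct generalization of Theorem \ref{th:main} should furnish leaf-cohomology lifts of each $p_i$ in positive weight, and the surjectivity of the Poisson bracket $S^3 H^{2n}_\bR\otimes S^k H^{2n}_\bR\to S^{k+1}H^{2n}_\bR$ collapses the invariant cochain complex onto its lowest-weight layer, which is just $\Lambda^*(S^3 H^{2n}_\bR)^*$. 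Classical $\mathrm{Sp}(2n,\bR)$-invariant theory then produces the required cocycles, whose products give the desired preimages of $\omega^k p_1^{k_1}\cdots p_n^{k_n}$. I expect this to be the main technical obstacle, since one must verify both that the candidate invariant cocycles are not coboundaries and that their images under $\Phi$ assemble into the correct monomials.

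\emph{Step 3 (non-triviality in $\mathrm{B\Gamma}_{2n}^\omega$).} The external direct product of $n$ codimension-$2$ transversely symplectic foliations is transversely symplectic of codimension $2n$, giving a classifying map $\mu\colon(\mathrm{B\Gamma}_2^\omega)^n\to\mathrm{B\Gamma}_{2n}^\omega$ under which the normal bundle decomposes as the external direct sum of the factors' normal bundles and the transverse symplectic form as the external sum of the factors' forms. By the Whitney sum formula and additivity of the transverse symplectic class, $\mu^*p_1=\sum_{i=1}^n\pi_i^*p_1$ and $\mu^*\omega=\sum_{i=1}^n\pi_i^*\omega$. Expanding $\mu^*(\omega^{n-k}p_1^k)$ by the multinomial theorem and projecting onto the Künneth component of multi-bidegree $(\underbrace{2,\ldots,2}_{n-k},\underbrace{4,\ldots,4}_{k})$ (for any fixed ordering of the $n$ factors into $n-k$ that contribute $\omega$ and $k$ that contribute $p_1$) yields a positive integer multiple of the cross product $\omega\times\cdots\times\omega\times p_1\times\cdots\times p_1$. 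By Corollary \ref{cor:nt}, $p_1$ is non-trivial in $H^4(\mathrm{B\Gamma}_2^\omega;\bR)$, and $\omega$ is non-trivial in $H^2(\mathrm{B\Gamma}_2^\omega;\bR)$ (detected, for instance, by any closed symplectic surface equipped with the point foliation), so this cross product is non-zero by Künneth. Therefore $\mu^*(\omega^{n-k}p_1^k)\neq 0$, forcing $\omega^{n-k}p_1^k\neq 0$ in $H^{2n+2k}(\mathrm{B\Gamma}_{2n}^\omega;\bR)$ for each $k=0,\ldots,n$. Since these $n+1$ classes occupy $n+1$ distinct degrees they are automatically linearly independent, giving $\dim\mathrm{Im}\,\widetilde\Phi\geq n+1$.
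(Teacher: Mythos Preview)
Your Step~1 and Step~3 are essentially sound. In Step~1 you reach the borderline condition by matching both degree and weight; the paper instead first bounds the weight above by zero and then observes that every class in $\mathrm{Im}\,\Phi$ is divisible by $\omega^n$ and hence annihilated by a further $\omega$, forcing the borderline equality from the other side. Your Step~3 is a classifying-space/K\"unneth version of the paper's argument, which instead evaluates $\omega^{n-k}p_1^{k}$ on the concrete manifold $\bC P^{n-k}\times E^{k}$ (with $E$ the total space of a foliated $\Sigma_g$-bundle of nonzero signature and symplectic holonomy). Both work.

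The genuine gap is Step~2. You do not construct preimages of the borderline monomials; you appeal to a ``direct generalization of Theorem~\ref{th:main}'' and to invariant theory, and you concede this is the main obstacle. The sentence about the surjectivity of the Poisson bracket ``collapsing'' the invariant cochain complex onto $\Lambda^*(S^3H)^*$ is not correct: surjectivity of the bracket identifies $S^3H$ as the \emph{abelianization} of $\mathfrak{ham}^1_{2n}$, but the cochain complex of $\mathfrak{ham}^1_{2n}$ is much larger than $\Lambda^*(S^3H)^*$, and you still have to show that the specific class $\omega^k p_1^{k_1}\cdots p_n^{k_n}$ lies in the image of $\wedge\,\omega^n$ applied to the subcomplex $\Lambda^*(S^3H)^*$.

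The paper fills this gap with a short and decisive observation you are missing. Working with the tautological $1$-forms $\delta^i$ and $\delta^i_{jk}$, the universal curvature is
\[
\Omega^i_j=\sum_{k}\delta^i\wedge\delta^k_{jk}\,,
\]
and $\delta^i$ lies in $H^*$ while $\delta^k_{jk}$ lies in $(S^3H)^*$ (as duals of Hamiltonian functions of degree $1$ and $3$ respectively). Hence already at the cocycle level
\[
p_i\in\bigl(\Lambda^{2i}H^*\otimes\Lambda^{2i}(S^3H)^*\bigr)^{\mathrm{Sp}(2n,\bR)}\,.
\]
Multiplying these together with $\omega^k\in(\Lambda^{2k}H^*)^{\mathrm{Sp}}$ and imposing $k+k_1+2k_2+\cdots+nk_n=n$ places the whole monomial in
\[
\bigl(\Lambda^{2n}H^*\otimes\Lambda^{2n-2k}(S^3H)^*\bigr)^{\mathrm{Sp}(2n,\bR)}
\;=\;
\omega^n\wedge\bigl(\Lambda^{2n-2k}(S^3H)^*\bigr)^{\mathrm{Sp}(2n,\bR)},
\]
since $\Lambda^{2n}H^*\cong\bR\langle\omega^n\rangle$. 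This exhibits each borderline monomial explicitly as $\omega^n\wedge(\text{an }\mathrm{Sp}\text{-invariant in }\Lambda^*(S^3H)^*)$, i.e.\ as a value of $\Phi$, with no separate verification of cocycle or non-coboundary conditions needed.
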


\begin{remark}
It seems reasonable to conjecture that the above homomorphism
$\Phi$ is trivial in the range $*>2n$. This is true for the case
$n=1$ by Theorem \ref{th:main}. 

It is an important
problem to determine whether the classes involving the
higher Pontrjagin classes $p_i \ (i\geq 2)$ are non-trivial, or not.
\end{remark}


\section{Proofs of the main results}

In this section we write $H$ as a shorthand for $H^{2n}_\bR$.

We begin with the proof of Theorem \ref{th:main}. 
For this, notice that
\begin{align*}
C^*_{GF}(\mathfrak{ham}_2^0,\mathfrak{sp}(2,\bR))_{w}&=\\
\sum_{k_3+2 k_4+3k_5\cdots=w}&
(\Lambda^{k_3} S^{3}H^*\otimes
\Lambda^{k_4} S^{4}H^*\otimes
\Lambda^{k_5} S^{5}H^*\otimes
\cdots)^{\mathrm{Sp}(2,\bR)} \ ,\\
C^*_{GF}(\mathfrak{ham}_2,\mathfrak{sp}(2,\bR))_{w}&=\\
\sum_{-k_1+k_3+2 k_4\cdots=w}&
(\Lambda^{k_1} H^*\otimes
\Lambda^{k_3} S^{3}H^*\otimes
\Lambda^{k_4} S^{4}H^*\otimes
\cdots)^{\mathrm{Sp}(2,\bR)} \ .
\end{align*}
It is easy to see that both 
$C^*_{GF}(\mathfrak{ham}_2^0,\mathfrak{sp}(2,\bR))_{w}$ and
$C^*_{GF}(\mathfrak{ham}_2,\mathfrak{sp}(2,\bR))_{w}$
vanish if $w$ is odd. Moreover, 
for $C^*_{GF}(\mathfrak{ham}_2^0,\mathfrak{sp}(2,\bR))_{w}$ with $w=2,4,6,8$
we have the following Table \ref{tab:1}, where $\chi$ denotes the Euler characteristic.

\begin{table}[h]
\caption{}
\begin{center}
\begin{tabular}{|l|c|c|c|c|c|c|}
\noalign{\hrule height0.8pt}
\hfil $k$ & $1$ & $2$ & $3$ & $4$ & $5$ & $\chi$  \\
\hline
$\dim C_{GF}^{k}(\mathfrak{ham}_2^0,\mathfrak{sp}(2,\bR))_{2}$ 
& $0$ & $1$ & $0$ & $0$ & $0$ & $1$  \\
\hline
$\dim C_{GF}^{k}(\mathfrak{ham}_2^0,\mathfrak{sp}(2,\bR))_{4}$ 
& $0$ & $0$ & $1$ & $1$ & $0$ & $0$\\
\hline
$\dim C_{GF}^{k}(\mathfrak{ham}_2^0,\mathfrak{sp}(2,\bR))_{6}$ 
& $0$ & $1$ & $1$ & $0$ & $0$ & $0$ \\
\hline
$\dim C_{GF}^{k}(\mathfrak{ham}_2^0,\mathfrak{sp}(2,\bR))_{8}$ 
& $0$ & $0$ & $4$ & $5$ & $1$ & $0$ \\
\noalign{\hrule height0.8pt}
\end{tabular}
\end{center}
\label{tab:1}
\end{table}

Here we have used well-known facts about the representations of
$\mathrm{Sp}(2,\bR)$ such as $S^kH^*\cong S^kH$ and
$$
S^kH\otimes S^{\ell}H
\cong
S^{k+\ell}H\oplus S^{k+\ell-2}H\oplus\cdots
\oplus S^{k-\ell}H\quad (k\geq \ell) \ ,
$$
as well as various formulae for the 
irreducible decomposition of $\Lambda^m  S^kH$; see e.g. \cite{FH}.

In the weight $2$ part, we find that the homomorphism
\begin{align*}
H_{GF}^{2}(\mathfrak{ham}_2^0,\mathfrak{sp}(2,\bR))_{2}
= (\Lambda^2 &S^3 H^*)^{\mathrm{Sp}(2,\bR)}\cong\bR
\ \overset{\land\omega}{\lra}\\
H_{GF}^{4}(\mathfrak{ham}_2,\mathfrak{sp}(2,\bR))_{0}
&=(\Lambda^2 H^*\otimes
\Lambda^2 S^3 H^*)^{\mathrm{Sp}(2,\bR)}
\cong\bR<p_1>
\end{align*}
is an isomorphism because
$\omega$ is a generator
of $(\Lambda^2 H^*)^{\mathrm{Sp}(2,\bR)}\cong\bR$. 
It follows that the first Pontrjagin class
$p_1$ can be decomposed as a wedge product
$$
p_1=\gamma_1\wedge \omega
$$
of a class 
$\gamma_1\in H_{GF}^{2}(\mathfrak{ham}_2^0,\mathfrak{sp}(2,\bR))_{2}\cong \bR$
in the first leaf cohomology
with the transverse symplectic class $\omega$.
In the weight $4$ part, we find that the coboundary operator
\begin{align*}
C^3_{GF}(\mathfrak{ham}_2^0,\mathfrak{sp}(2,\bR))_{4}
&=(\Lambda^2 S^3 H^*\otimes S^4 H^*)^{\mathrm{Sp}(2,\bR)}
\cong\bR\\
&\overset{\delta}{\lra}
C^4_{GF}(\mathfrak{ham}_2^0,\mathfrak{sp}(2,\bR))_{4}
=(\Lambda^4 S^3 H^*)^{\mathrm{Sp}(2,\bR)}\cong\bR
\end{align*}
is an isomorphism. This, together with the computation shown
in Table~\ref{tab:1}, shows that 
$H^*(\mathfrak{ham}_2^0,\mathfrak{sp}(2,\bR))_{4}$ is trivial.
Similarly the weight $6$ part 
$H^*(\mathfrak{ham}_2^0,\mathfrak{sp}(2,\bR))_{6}$
is trivial because the coboundary operator
\begin{align*}
C^2_{GF}&(\mathfrak{ham}_2^0,\mathfrak{sp}(2,\bR))_{6}
=(\Lambda^2 S^5 H^*)^{\mathrm{Sp}(2,\bR)}
\cong\bR\\
&\overset{\delta}{\lra}
C^3_{GF}(\mathfrak{ham}_2^0,\mathfrak{sp}(2,\bR))_{6}
=(S^3 H^*\otimes S^4 H^*\otimes S^5 H^*
)^{\mathrm{Sp}(2,\bR)}\cong\bR
\end{align*}
can be seen to be an isomorphism.

The cochain complex 
$C^{*}_{GF}(\mathfrak{ham}_2^0,\mathfrak{sp}(2,\bR))_{8}$
for the weight $8$ part is given in Table \ref{tab:2},
where the symbols $(347), (4^26)$, for example,  stand for
\begin{align*}
(S^3 H^*\otimes S^4 H^*\otimes S^7 H^*
)^{\mathrm{Sp}(2,\bR)}&\cong\bR\\
(\Lambda^2 S^4 H^*\otimes S^6 H^*
)^{\mathrm{Sp}(2,\bR)}&\cong\bR
\end{align*}
respectively, and similarly for the other ones.

\begin{table}[h]
\caption{generators for $C_{GF}^{*}(\mathfrak{ham}_2^0,\mathfrak{sp}(2,\bR))_{8}$}
\begin{center}
\begin{tabular}{|l|c|c|}
\noalign{\hrule height0.8pt}
\hfil ${}$ & $\text{dim}$ & $\text{generators}$ \\
\hline
$C_{GF}^{3}(\mathfrak{ham}_2^0,\mathfrak{sp}(2,\bR))_{8}$  & $4$
& $(347) (356) (4^26)(45^2)$\\
\hline
$C_{GF}^{4}(\mathfrak{ham}_2^0,\mathfrak{sp}(2,\bR))_{8}$  & $5$
& $(3^246) (3^25^2)_2 (34^25)_2$\\
\hline
$C_{GF}^{5}(\mathfrak{ham}_2^0,\mathfrak{sp}(2,\bR))_{8}$  & $1$
& $(3^345)$\\
\noalign{\hrule height0.8pt}
\end{tabular}
\end{center}
\label{tab:2}
\end{table}
The subscript $2$ in the symbol $(3^25^2)_2$
means that its dimension is $2$, namely we have
$$
(\Lambda^2 S^3 H^*\otimes \Lambda^2 S^5 H^*
)^{\mathrm{Sp}(2,\bR)}\cong\bR^2 \ .
$$
A direct computation of the coboundary operators shows that
this cochain complex is acyclic.

The dimensions of the cochain complex for the weight $10$ part are
given in the first line of Table \ref{tab:3}.
In the second line, the dimensions of 
the cochain complex for the weight $8$ part of
$H^*_{GF}(\mathfrak{ham}_2,\mathfrak{sp}(2,\bR))_{8}$
are given. These were first computed by Gel'fand-Kalinin-Fuks \cite{GKF}
and were later re-computed by Metoki \cite{Metoki}.

\begin{table}[h]
\caption{}
\begin{center}
\begin{tabular}{|l|c|c|c|c|c|c|c|c|c|}
\noalign{\hrule height0.8pt}
\hfil $k$ & $1$ & $2$ & $3$ & $4$ & $5$ & $6$ & $7$ & $8$ & $\chi$ \\
\hline
$\dim C_{GF}^{k-2}(\mathfrak{ham}_2^0,\mathfrak{sp}(2,\bR))_{10}$ 
& $0$ & $0$ & $0$ & $1$ & $3$  & $9$ & $12$ & $4$ & $-1$ \\
\hline
$\dim C_{GF}^{k}(\mathfrak{ham}_2,\mathfrak{sp}(2,\bR))_{8}$  
& $0$ & $0$
& $5$ & $13$ & $17$ & $18$ & $14$ & $4$ & $-1$\\
\noalign{\hrule height0.8pt}
\end{tabular}
\end{center}
\label{tab:3}
\end{table}

As was already mentioned, Gel'fand, Kalinin and Fuks determined
$H^*_{GF}(\mathfrak{ham}_2,\mathfrak{sp}(2,\bR))_{8}$ 
by a computer calculation and found that it is $1$-dimensional,
generated by their class of degree $7$. 
Metoki re-computed this cohomology group, 
again with the aid of a computer program, and constructed an explicit (but complicated) cocycle for the
$GKF$-class. His cocycle is not divisible by $\omega$, and, so far, no
cocycle divisible by $\omega$ has been known.

Now it can be checked that the homomorphism
$$
\wedge\omega\colon C^*_{GF}(\mathfrak{ham}_2^0,\mathfrak{sp}(2,\bR))_{10}
\ \lra\
C^{*+2}_{GF}(\mathfrak{ham}_2,\mathfrak{sp}(2,\bR))_{8}
$$
induces an embedding of cochain complexes which shifts the
degree by $2$ and the weight by $-2$.
Our purpose is to prove that it induces an isomorphism in cohomology.
By an explicit computation, we determined a system of generators
for the first chain complex
$C_{GF}^{*}(\mathfrak{ham}_2^0,\mathfrak{sp}(2,\bR))_{10}$
as shown in Table \ref{tab:4}.

\begin{table}[h]
\caption{generators for $C_{GF}^{*}(\mathfrak{ham}_2^0,\mathfrak{sp}(2,\bR))_{10}$}
\begin{center}
\begin{tabular}{|l|c|c|}
\noalign{\hrule height0.8pt}
\hfil ${}$ & $\text{dim}$ & $\text{generators}$ \\
\hline
$C_{GF}^{2}(\mathfrak{ham}_2^0,\mathfrak{sp}(2,\bR))_{10}$ 
& $1$ & $(7^2)$ \\
\hline
$C_{GF}^{3}(\mathfrak{ham}_2^0,\mathfrak{sp}(2,\bR))_{10}$  & $3$
& $(358) (367) (457)$\\
\hline
$C_{GF}^{4}(\mathfrak{ham}_2^0,\mathfrak{sp}(2,\bR))_{10}$  & $9$
& $(3^248) (3^257) (34^27)(3456)_4(35^3)(4^36)$\\
\hline
$C_{GF}^{5}(\mathfrak{ham}_2^0,\mathfrak{sp}(2,\bR))_{10}$  & $12$
& $(3^347) (3^356) (3^24^26)_3(3^245^2)_4(34^35)_2(4^5)$\\
\hline
$C_{GF}^{6}(\mathfrak{ham}_2^0,\mathfrak{sp}(2,\bR))_{10}$  & $4$
& $(3^45^2) (3^34^25)_2 (3^24^4)$\\
\noalign{\hrule height0.8pt}
\end{tabular}
\end{center}
\label{tab:4}
\end{table}

In general, there are three equivalent ways of expressing elements
in $C^*_{GF}(\mathfrak{ham}_{2n},\mathrm{Sp}(2n,\bR))$.
The first one is in terms of (duals of) 
$\mathrm{Sp}(2n,\bR)$-invariant tensors of Hamiltonian functions.
The second one is by means of vertex oriented graphs
which encode ways of 
contraction of tensors of Hamiltonian functions
by applying the symplectic pairing
$H^{2n}_\bR\otimes H^{2n}_\bR\ra\bR$ along the edges.
The third one is in terms of tautological $1$-forms
$$
\delta^{i}_{j_1\cdots j_k}\in C^1_{GF}(\mathfrak{a}_{2n})
$$
(restricted to the Lie subalgebra $\mathfrak{ham}_{2n}$), defined by
$$
\delta^{i}_{j_1\cdots j_k}(X)=(-1)^k 
\frac{\partial f_i}{\partial x_{j_1}\cdots\partial x_{j_k}}(0,\cdots, 0) \ ,
$$
where
$$
X=\sum_{i=1}^{2n} f_i \frac{\partial}{\partial x_i} \in \mathfrak{a}_{2n} \ ;
$$
see e.g. \cite{B}. For example, a generator of 
$(\Lambda^2 S^3 H^*)^{\mathrm{Sp}(2,\bR)}$ 
in the first line of Table \ref{tab:1} can be given in either of the following three ways:
\begin{align*}
&\mathrm{(1)}\ (x^3\wedge y^3-3x^2y\wedge xy^2)^* \ ,\\
&\mathrm{(2)}\ \text{a graph with $2$ vertices and $3$ edges joining them} \ ,\\
&\mathrm{(3)}\  -\delta^1_{22}\wedge\delta^2_{11}-3\delta^1_{11}\wedge\delta^2_{22} \ .
\end{align*}

Metoki \cite{Metoki} gave an explicit basis for 
$C^*_{GF}(\mathfrak{ham}_2,\mathfrak{sp}(2,\bR))_{8}$
and computed the coboundary operators in terms of this basis.
Although he did not use the representation theory of $\mathrm{Sp}(2,\bR)$
explicitly, it turns out that our basis given in Table \ref{tab:4}
appears as a subbasis of his.
Therefore we can use his computation 
(which we checked by our method).
In particular, the coboundary maps
$$
C^4(\mathfrak{ham}_2^0,\mathfrak{sp}(2,\bR))_{10}
\overset{A}{\lra}
C^5(\mathfrak{ham}_2^0,\mathfrak{sp}(2,\bR))_{10}
\overset{B}{\lra}
C^6(\mathfrak{ham}_2^0,\mathfrak{sp}(2,\bR))_{10}
$$
are represented by the following
$(12,9)$-matrix
$$
\setcounter{MaxMatrixCols}{18}
A=
\begin{pmatrix}
45 & 18 & 9 & 0 & 0 & 0 & 0 & 0 & 0  \\
0 & -9 & 0 & 0 & 5 &6&6&0&0\\
-10 & 0 & -2 & 10 & -10 &-19&-33&0&1\\
-120 & 0 & 72 & 10 & -16 &-3&16&0&6\\
30 & 0 & -30 & 0 & -2 &-12&-21&0&-3\\
-8 & 50 & 0 & 10 & -32 &-48&-60&13&0\\
-1 & -2 & 0 & 10 & -4 &-15&-25&11&0\\
-15 & 18 & 0 & 20 & -34 &-57&-73&0&0\\
-70 & 16 & 0 & 0 & 0 &6&-14&20&0\\
0 & 0 & 0 -9 & -20 & 20 &39&52&0&-1\\
0 & 0 & 57 & 10 & 4 &12&16&0&-3\\
0 & 0 & 0 & 0 & 0 &0&0&0&-140
\end{pmatrix}
$$
and the $(4,12)$-matrix 
$$
\setcounter{MaxMatrixCols}{18}
B=
\begin{pmatrix}
0 & 30 & -39 & 0 \\
-56 & -8 & -12 & 0 \\
0 & 6 & 9 & 0 \\
0 & 28 & -28 & -14 \\
0 & 68 & -66 & -56 \\
-10 & -6 & 12 & 0 \\
10 & -2 &4 & 0 \\
0 & -22 & 9 & 0 \\
1 & 5 & -10 & 0 \\
0 & -12 & 12 & -14 \\
0 & -6 & 9 & -14 \\
0 & 0 & 0 & 1
\end{pmatrix} \ .
$$
Of course we have $BA=O$.

The corresponding coboundary maps in
$$
C^6(\mathfrak{ham}_2,\mathfrak{sp}(2,\bR))_{8}
\overset{\widetilde{A}}{\lra}
C^7(\mathfrak{ham}_2,\mathfrak{sp}(2,\bR))_{8}
\overset{\widetilde{B}}{\lra}
C^8(\mathfrak{ham}_2,\mathfrak{sp}(2,\bR))_{8}
$$
are given by the following $(14,18)$-matrix $\widetilde{A}$
and $(4,14)$-matrix $\widetilde{B}$:
$$
\widetilde{A}=
\begin{pmatrix}
A & A_1\\
O & A_2
\end{pmatrix},
\quad
\widetilde{B}=
\begin{pmatrix}
B \\
B_1
\end{pmatrix}.
$$
Here $O$ denotes the zero matrix of size $(2,9)$ and
this checks the fact that
$$
C^*_{GF}(\mathfrak{ham}_2^0,\mathfrak{sp}(2,\bR))_{10}
\ \subset\
C^{*+2}_{GF}(\mathfrak{ham}_2,\mathfrak{sp}(2,\bR))_{8}
$$
is indeed a subcomplex.
Now an explicit computation shows that the above inclusion
induces an isomorphism in cohomology.
This completes the proof of Theorem \ref{th:main}. 
\qed

\begin{remark}
The unique leaf cohomology class
$$
\eta\in H^5_{GF}(\mathfrak{ham}_2^0,\mathfrak{sp}(2,\bR))_{10}
$$
such that $\eta\wedge\om=GKF$
can be represented by an explicit cocycle in
$C^5_{GF}(\mathfrak{ham}_2^0,\mathfrak{sp}(2,\bR))_{10}$
which is a linear combination of the 
cochains of the forms
$(3^347), (3^24^26), (3^245^2)$
in Table \ref{tab:4}.
We omit the precise formula.
\end{remark}

\begin{proof}[Proof of Corollary \ref{cor:nt}]
We proved in \cite{KM03} (see also \cite{KM07})
that both the first Pontrjagin
class $p_1\in H^4(\mathrm{ESymp}^\delta(\Sigma_g);\bR)$ 
and its fiber integral
$e_1\in H^2(\mathrm{BSymp}^\delta(\Sigma_g);\bR)$,
which is the first Mumford-Morita-Miller class, are non-trivial.
More precisely, we proved the existence of 
foliated $\Sg$-bundles over closed oriented surfaces 
such that the signatures of their total spaces are
non-zero, while their total holonomy groups are
contained in the group $\mathrm{Symp}(\Sg)$ of
area-preserving diffeomorphisms of $\Sg$ (with respect to some area form).
By Theorem \ref{th:main} the homomorphism
$$
\wedge\omega\colon
H^2_{GF}(\mathfrak{ham}_2^0,\mathfrak{sp}(2,\bR))\cong\bR
\lra
H^4_{GF}(\mathfrak{ham}_2,\mathfrak{sp}(2,\bR))\cong\bR
$$
is an isomorphism, where the target
is generated by the first Pontrjagin class $p_1$.
The result follows.
\end{proof}

\begin{proof}[Proof of Theorem \ref{th:nt2}]
We begin with the proof of the first statement.
On the one hand, the weight of the elements in $S^3 H_\bR^{2n}$ is $1$
while that of $\om^n$ is $-2n$. Hence the weights of 
elements of $\mathrm{Im}\Phi$ restricted to the range
$*\leq 2n$ are non-positive. 
By the result of Gel'fand-Kalinin-Fuks \cite{GKF} mentioned
in the Introduction, we can conclude that
$\mathrm{Im}\Phi$ is contained in the span of the
classes
$$
\omega^k p_1^{k_1}\cdots p_n^{k_n} \in 
H^*_{GF}(\mathfrak{ham}_{2n},\mathrm{Sp}(2n,\bR))
$$
with $k+k_1+2k_2\cdots +n k_n \leq n$.
On the other hand, any element in $\mathrm{Im}\Phi$ is 
annihilated by taking the wedge product with a single $\om$
because $\om^{n+1}$ vanishes identically. Therefore
$\mathrm{Im}\Phi$ is contained in the span of the above classes 
with the condition that
$k+k_1+2k_2\cdots +n k_n$ is precisely equal to $n$.

It remains to prove that all these classes are indeed
contained in $\mathrm{Im}\Phi$.
For this, we use the well-known formulae which 
express $\om$ and Pontrjagin classes in terms of the
tautological $1$-forms.
With respect to the standard basis
$x_1,\cdots,x_n,y_1,\cdots,y_n$ of the symplectic vector
space $H^{2n}_\bR$ and the tautological $1$-forms
$$
\delta^{i}_{j_1\cdots j_k}\in C^1_{GF}(\mathfrak{ham}_{n}),
$$
we have 
$$
\omega=\delta^1\wedge\delta^{n+1}+\cdots+\delta^{n}\wedge\delta^{2n}.
$$
The universal curvature form $\Omega=(\Omega^i_j)$
can be written as
$$
\Omega^i_j=\sum_{k=1}^n \delta^i\wedge \delta^k_{jk} \ ,
$$
see e.g. \cite{B}, and the Pontrjagin classes $p_i\ (i=1,2,\cdots)$
are certain homogeneous polynomials
on $\Omega^i_j$ of degree $2i$. In terms of the duals of
Hamiltonian functions,
the tautological forms $\delta^i$ and $\delta^k_{jk}$ correspond
to elements of $H^*$ and $S^3 H^*$, respectively. We can now conclude that
$$
p_i\in (\La^{2i} H^*\otimes \La^{2i} S^3 H^*)^{\mathrm{Sp}(2n,\bR)}
\quad (i=1,2,\cdots).
$$
It follows that any element
$\omega^k p_1^{k_1}\cdots p_n^{k_n}$
with $k+k_1+2k_2\cdots +n k_n = n$
is contained in
$$
(\La^{2n} H^*\otimes \La^{2n-2k} S^3 H^*)^{\mathrm{Sp}(2n,\bR)}
\cong 
\om^n\wedge (\La^{2n-2k} S^3 H^*)^{\mathrm{Sp}(2n,\bR)}
$$
because $\La^{2n} H^*\cong\bR$ generated by $\om^n$.
Hence such elements are contained in $\mathrm{Im}\Phi$
proving the first part of the Theorem.

Next we prove the second part.
Let $\pi\colon E\lra X$ be a foliated $\Sg$-bundle over a closed
oriented surface with non-vanishing signature such that
the total holonomy group is contained in the group
$\mathrm{Symp}(\Sg)$. The existence of such bundles was
proved in our paper \cite{KM03}. The classifying map
$f\colon E\lra \mathrm{B\Gamma}_2^\om$ of the transversely 
symplectic foliation on $E$ of codimension $2$ has the property that
$f^*(p_1)\neq 0$. Now consider the manifold
$\bC P^{n-k}\times E^k$ equipped with transversely symplectic
foliation of codimension $2n$ which is induced from the 
point foliation on $\bC P^{n-k}$ and the above foliation on $E$.
Then it is easy to see that the characteristic class 
$\om^{n-k}p_1^k$ of this foliation is non-trivial.
This completes the proof.
\end{proof}

\begin{remark}
By the calculation in the proof $p_1$ is divisible by $\omega$ only if $n=1$, although
in general $p_1^n$ is divisible by $\omega$.
The dimension of $(\La^{2i} H^*\otimes \La^{2i} S^3 H^*)^{\mathrm{Sp}(2n,\bR)}$
is $1$ for $n=1$ and is $2$ for $n\geq 2$.
\end{remark}

\section{The Euler characteristic of 
$H^*(\mathfrak{ham}^0_{2n},\mathrm{Sp}(2n,\bR))$}

As we mentioned already,
Perchik \cite{P} gave a formula for the generating function
$$
\sum_{w=0}^\infty \chi(H^*(\mathfrak{ham}_{2n},\mathrm{Sp}(2n,\bR))_w)t^w
$$
of the Euler characteristic of the relative cohomology of
$\mathfrak{ham}_{2n}$. In this section, we prove a similar
formula for 
$H^*_{GF}(\mathfrak{ham}^0_{2n},\mathrm{Sp}(2n,\bR))$.

Following \cite{P}, let us define rational functions 
$p_i(n)\ (i=0,1,\cdots)$
on $n+1$ variables $x_1,\cdots,x_n, t$ 
(polynomials with respect to $t$) as follows.
First consider
$$
a=(a_1,\cdots,a_n),\quad b=(b_1,\cdots,b_n)\quad (a_i,b_i\geq 0)
$$
and put
$$
|a+b|=\sum (a_i+b_i),\quad x^{a-b}=x_1^{a_1-b_1}\cdots x_n^{a_n-b_n}.
$$
Then define
\begin{align*}
p_0(n)&=\Pi_{|a+b|=2,\ a\not= b} (1-x^{a-b})\\
p_k(n)&=\Pi_{|a+b|=2+k} (1-t^{k}x^{a-b}) \ .
\end{align*}

\begin{theorem}
The constant term with respect to $x_i$ $(i=1,\cdots,n)$ of the infinite product
$\Pi_{i=0}^\infty\ p_i(n)$ is equal to 
$$
n! 2^n \sum_{w=0}^\infty 
\chi(H^*(\mathfrak{ham}^0_{2n},\mathrm{Sp}(2n,\bR))_w)t^w \ ,
$$
where the subscript $w$ denotes the weight $w$ part of the cohomology.
\label{th:gf}
\end{theorem}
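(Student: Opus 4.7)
The plan is to mimic Perchik's \cite{P} derivation of the analogous formula for $\mathfrak{ham}_{2n}$; the only substantive difference is that the Hamiltonian summation now starts from degree $3$ rather than degree $1$, since $\mathfrak{ham}^0_{2n}$ excludes both constant and linear Hamiltonian functions. The argument has three standard ingredients: identifying the cochain complex as $Sp$-invariants in a weighted exterior algebra, computing its super-character via a determinant identity, and extracting invariants by Weyl integration on the compact real form.

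First, by the Euler--Poincar\'e principle I would replace the cohomology by the cochain complex. Since $\mathfrak{sp}(2n,\bR)\cong S^2H$ as $Sp$-representations, the relative complex satisfies
$$C^*_{GF}(\mathfrak{ham}^0_{2n},\mathfrak{sp}(2n,\bR))_w \cong \Big(\Lambda^*\Big(\bigoplus_{k\geq 3} S^k H^*\Big)\Big)_w^{Sp(2n,\bR)},$$
where $S^k H^*$ is assigned weight $k-2$. Applying the identity $\sum_m(-1)^m \mathrm{tr}(g\,|\,\Lambda^m V)=\det(1-g|_V)$ to the action of $t\cdot x$ on this graded $V$ (with $x=\mathrm{diag}(x_1,\ldots,x_n,x_1^{-1},\ldots,x_n^{-1})$ a generic $Sp$-torus element), and using that the torus weights of $S^kH$ are exactly the monomials $x^{a-b}$ with $|a+b|=k$, one obtains the super-character
$$\det(1 - tx|_V) = \prod_{k\geq 3}\prod_{|a+b|=k}(1 - t^{k-2}x^{a-b}) = \prod_{j\geq 1} p_j(n).$$

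Next, to extract $Sp$-invariants I would pass to the compact real form $Sp(n)$, whose invariants agree with those of $Sp(2n,\bR)$ for the algebraic representations in question, and apply the Weyl integration formula to express
$$\dim V^{Sp(n)} = \frac{1}{|W(C_n)|}[x^0]\bigl(\chi_V(x)\cdot|\Delta(x)|^2\bigr).$$
The crucial identification is $|\Delta(x)|^2 = p_0(n)$: the roots of the $C_n$ root system are precisely $\pm 2e_i$ and $\pm(e_i\pm e_j)$ for $i<j$, which is exactly the list of monomials $x^{a-b}$ with $|a+b|=2$ and $a\neq b$, each occurring with multiplicity one. Since $|W(C_n)|=2^n n!$, combining this with the super-character above yields
$$2^n n!\sum_w \chi\bigl(C^*_{GF}(\mathfrak{ham}^0_{2n},\mathfrak{sp}(2n,\bR))_w\bigr)\,t^w = [x^0]\prod_{j=0}^\infty p_j(n),$$
which gives the theorem after applying Euler--Poincar\'e on the left.

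The main point requiring care is the identification of $p_0(n)$ with the $C_n$ Weyl denominator squared, including the verification that multiplicities match; this is a concrete check one has to perform explicitly. Beyond that, one should justify that the infinite product is a well-defined element of $\bR[x_1^{\pm 1},\ldots,x_n^{\pm 1}][[t]]$: each factor $p_j(n)$ with $j\geq 1$ has the form $1+O(t^j)$, so any given coefficient of $t^w$ depends on only finitely many factors, allowing the constant-term operation to be performed termwise. The passage from $Sp(2n,\bR)$-invariants to $Sp(n)$-invariants is routine.
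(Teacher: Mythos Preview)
Your argument is correct and essentially reproduces Perchik's computation from scratch, adapted to the subalgebra $\mathfrak{ham}^0_{2n}$. The paper, by contrast, gives a one-line reduction to Perchik's published formula: since Perchik's product for $\mathfrak{ham}_{2n}$ is $p_{-1}(n)\prod_{i\geq 0}p_i(n)$ with the extra factor $p_{-1}(n)=\prod_{|a+b|=1}(1-t^{-1}x^{a-b})$ accounting for the summand $S^1H$ of weight $-1$ (the constant vector fields), and since passing from $\mathfrak{ham}_{2n}$ to $\mathfrak{ham}^0_{2n}$ precisely deletes this summand, one simply drops $p_{-1}(n)$. Your approach has the advantage of being self-contained and of making transparent why $p_0(n)$ is the Weyl measure and why $2^n n!$ appears; the paper's approach is shorter but treats Perchik's result as a black box. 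One small wording issue: your opening sentence says $\mathfrak{ham}^0_{2n}$ ``excludes both constant and linear Hamiltonian functions,'' but in fact $\mathfrak{ham}^0_{2n}$ still contains $S^2H$; it is the passage to the relative complex that removes the quadratic part, which is why your displayed exterior algebra (correctly) begins at $k=3$.
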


\begin{proof}
Perchik's formula was obtained by multiplying
the above infinite product
$\Pi_{i=0}^\infty\ p_i(n)$ with one more rational function
$$
p_{-1}(n)=\Pi_{|a+b|=1,\ a\not= b} (1-t^{-1}x^{a-b}).
$$
This part corresponds to the constant term
of $\mathfrak{ham}_{2n}$ which is isomorphic
to $H^{2n}_\bR$ as a representation of $\mathrm{Sp}(2n,\bR)$
and whose weight is $-1$.
Since the relative cohomology
$H^*_{GF}(\mathfrak{ham}^0_{2n},\mathrm{Sp}(2n,\bR))$
is defined by ignoring this part,
the proof follows by eliminating $p_{-1}(n)$
from the original formula of Perchik.
\end{proof}

\begin{remark}
In the case of $n=1$, a computer computation carried out with the help of 
M.~Suzuki shows that $\frac{1}{2}$ times the above
constant term in low degrees in $t$ is given by
$$
1+ t^2-t^{10}+ t^{12}- t^{14}- t^{16}+ t^{18}-3 t^{24}+2 t^{26}+\cdots \ ,
$$
while the corresponding series for 
$H^*_{GF}(\mathfrak{ham}^0_2,\mathfrak{sp}(2,\bR))$ due to Perchik
is
$$
t^{-2}+2-t^{8}- t^{14}- t^{22}- t^{28}+ t^{30}- t^{32}+\cdots \ .
$$
The coefficient $-1$ of $t^{10}$ in the former series corresponds
to our leaf cohomology class $\eta$.
Observe also that the coefficient of $t^{16}$ is
$-1$. The corresponding coefficient of $t^{14}$ in the latter series
is also $-1$ which represents the Metoki class in 
$H^{9}_{GF}(\mathfrak{ham}_2,\mathfrak{sp}(2,\bR))_{14}$
(see \cite{Metoki}). Although the cocycle given by Metoki himself
is not divisible by $\omega$, it seems highly likely that
his class can also be decomposed as $\eta'\wedge\omega$
for some leaf cohomology class
$\eta'\in H^7_{GF}(\mathfrak{ham}^0_2,\mathfrak{sp}(2,\bR);\bR)_{16}$.
\end{remark}

\section{Concluding remarks}

It is easy to see that the relative cohomology 
$$
H^*_{GF}(\mathfrak{ham}^0_{2n},\mathrm{Sp}(2n,\bR))_w
\cong
H^*_{GF}(\mathfrak{ham}^1_{2n})_w^{\mathrm{Sp}(2n,\bR)}
$$
stabilizes as $n$ goes to infinity. In fact, the limit
cohomology is nothing but one of Kontsevich's 
theories of graph cohomologies developed in
\cite{Kontsevich93}, \cite{Kontsevich94},
more precisely the {\it commutative} case
(see \cite{K}).
As before, the abelianization homomorphism
$$
\mathfrak{ham}_{2n}^1\lra S^3 H^{2n}_\bR
$$
induces a homomorphism
$$
\Phi_n\colon H^*(S^3 H^{2n}_\bR)^{\mathrm{Sp}(2n,\bR)}
\lra H^*_{GF}(\mathfrak{ham}^1_{2n})^{\mathrm{Sp}(2n,\bR)}.
$$
However, the stable cohomology
$$
\lim_{n\to\infty} H^*(S^3 H^{2n}_\bR)^{\mathrm{Sp}(2n,\bR)}
$$
is isomorphic to
$$
\bR[\text{vertex oriented connected trivalent graph}]/(\text{AS}) \ ,
$$
where $AS$ denotes the anti-symmetric relation.
If we add another relation, called the $IHX$ relation,
to the above, we obtain the algebra
$$
\mathcal{A}(\phi)=
\bR[\text{vertex oriented connected trivalent graph}]/(\text{AS, IHX}).
$$
This algebra plays a fundamental role in the theory of
finite type invariants for homology $3$-spheres 
due to Ohtsuki \cite{O}, who extended the foundational theory of Vassiliev
for knots, and developed by
Le, Murakami and Ohtsuki (see \cite{LMO}).

Garoufalidis and Nakamura proved the following result:
\begin{theorem}[{\bf Garoufalidis and Nakamura \cite{GN}}]
The ideal $(S^4 H^{2n}_\bR)$ of $\Lambda^* S^3H^{2n}_\bR$
generated by $S^4 H^{2n}_\bR\subset \Lambda^2 S^3 H^{2n}_\bR$
corresponds exactly to the $IHX$-relation so that
there is an isomorphism
$$
\mathcal{A}(\phi)\cong
(\Lambda^* S^3H^{2n}_\bR/(S^4 H^{2n}_\bR))^{\mathrm{Sp}(2n,\bR)}.
$$
\end{theorem}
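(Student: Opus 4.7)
The plan is to translate both sides of the asserted isomorphism into classical invariant theory for $\mathrm{Sp}(2n,\bR)$ and then match the IHX relator with the $S^4 H^{2n}_\bR$-isotypic component of $\Lambda^2 S^3 H^{2n}_\bR$ via an explicit equivariant computation. First I would set up the dictionary between trivalent graphs and invariants. By Weyl's First Fundamental Theorem of invariant theory for $\mathrm{Sp}(2n,\bR)$, the space of invariants in $(H^{2n}_\bR)^{\otimes 2m}$ is spanned by products of $m$ copies of the symplectic pairing, one for each perfect matching of the $2m$ tensor slots. Restricting to $(S^3 H^{2n}_\bR)^{\otimes k}\subset (H^{2n}_\bR)^{\otimes 3k}$ symmetrizes slots within each block of three, so $\mathrm{Sp}$-invariants of $(S^3 H^{2n}_\bR)^{\otimes k}$ are spanned by trivalent graphs on $k$ labelled vertices (with loops and multi-edges allowed). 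Antisymmetrizing the vertex labels to pass to $\Lambda^k S^3 H^{2n}_\bR$ imposes the AS relation on unlabelled vertex-oriented trivalent graphs, identifying $(\Lambda^* S^3 H^{2n}_\bR)^{\mathrm{Sp}(2n,\bR)}$ with the AS-graph algebra.

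Next, I would locate the component $S^4 H^{2n}_\bR$ inside $\Lambda^2 S^3 H^{2n}_\bR$. The $\mathrm{Sp}(2n,\bR)$-equivariant decomposition of $S^3 H^{2n}_\bR\otimes S^3 H^{2n}_\bR$ contains $S^4 H^{2n}_\bR$ with multiplicity one, and a parity check shows that this component lies in the antisymmetric part $\Lambda^2 S^3 H^{2n}_\bR$; graphically, the complementary summands there correspond to configurations whose two trivalent vertices are joined by at least one direct edge. The technical heart of the proof is then a direct computation: writing a highest-weight vector of $S^4 H^{2n}_\bR$ explicitly as an alternating sum inside $S^3 H^{2n}_\bR\wedge S^3 H^{2n}_\bR$ and wedging it with $k-2$ further trivalent vertices, the resulting $\mathrm{Sp}$-invariant read through the dictionary of the previous step is precisely the signed sum of the three trivalent graphs produced by resolving a distinguished edge in the three possible ways, i.e.\ the IHX relator. $\mathrm{Sp}(2n,\bR)$-equivariance then propagates this identity to the whole $S^4 H^{2n}_\bR$ submodule, so the ideal $(S^4 H^{2n}_\bR)$ maps into the span of the IHX relations after taking invariants.

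For the reverse inclusion I would compare dimensions isotype-by-isotype, showing that the IHX relators span precisely the image of the multiplication map $S^4 H^{2n}_\bR\otimes \Lambda^{k-2} S^3 H^{2n}_\bR \to \Lambda^k S^3 H^{2n}_\bR$ in each weight, so that no further quotient is needed. The main obstacle is the explicit expansion in the second paragraph: one needs a presentation of the highest-weight vector of $S^4 H^{2n}_\bR$ inside $\Lambda^2 S^3 H^{2n}_\bR$ clean enough to make the three edge-resolutions emerge with the correct signs \emph{uniformly in $n$}. A convenient tactic is to verify the matching first in the rank-one case $n=1$, where $\Lambda^2 S^3 H \cong S^4 H\oplus S^0 H$ and the computation reduces to hand manipulations in the basis $\{x^3, x^2y, xy^2, y^3\}$, and then to upgrade to arbitrary $n$ by naturality of Kontsevich's construction under the inclusions $\mathrm{Sp}(2n,\bR)\hookrightarrow \mathrm{Sp}(2n+2,\bR)$.
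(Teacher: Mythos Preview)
The paper does not prove this theorem: it is quoted as a result of Garoufalidis and Nakamura \cite{GN} in the concluding remarks, with no argument supplied beyond the citation. There is therefore no ``paper's own proof'' to compare your proposal against.

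That said, your outline is broadly the right one and is close in spirit to what Garoufalidis and Nakamura actually do: set up the dictionary between $\mathrm{Sp}$-invariants in tensor powers of $S^3 H$ and trivalent graphs via Weyl's First Fundamental Theorem, then identify the image of the $S^4 H$ summand of $\Lambda^2 S^3 H$ with the IHX relator by an explicit equivariant calculation. One caution: as stated the isomorphism $\mathcal{A}(\phi)\cong(\Lambda^* S^3 H^{2n}_\bR/(S^4 H^{2n}_\bR))^{\mathrm{Sp}(2n,\bR)}$ is a \emph{stable} statement, true degree-by-degree once $n$ is large enough relative to the number of vertices; for small $n$ the Second Fundamental Theorem for $\mathrm{Sp}(2n,\bR)$ introduces additional relations among the graph-invariants beyond AS and IHX. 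Your ``reverse inclusion by dimension count'' should therefore be phrased in the stable range (or as a limit in $n$), and your proposed bootstrap from $n=1$ to arbitrary $n$ via naturality does not by itself handle this---the rank-one case already sees extra relations. If you keep track of the stable range explicitly, the argument goes through.
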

Since it can be seen that $\mathrm{Ker}\ \Phi_\infty$
coincides with the $\mathrm{Sp}$-invariant part
$\left((S^4 H^{2n}_\bR)\right)^{\mathrm{Sp}(2n,\bR)}$
of the above ideal, we conclude that
$$
\mathrm{Image}\ \Phi_\infty\cong
\mathcal{A}(\phi) \ .
$$
Thus it is a very important problem to determine
$\mathrm{Coker}\ \Phi_\infty$. We have tried to determine
whether our leaf cohomology class
$\eta\in H^5_{GF}(\mathfrak{ham}_2^0,\mathfrak{sp}(2,\bR))_{10}$
survives in the {\it stable} cohomology
$$
\lim_{n\to\infty} H^5_{GF}(\mathfrak{ham}^0_{2n},\mathfrak{sp}(2n,\bR))_{10} \ ,
$$
or not. We have the same problem for other unstable
leaf cohomology classes.
So far this attempt has remained unsuccessful. 
One method of attacking this problem would be to
compute the generating function $c(t)$ for the
commutative graph cohomology by making use of
Theorem~\ref{th:gf}. More precisely,
there is important problem of computing
$$
c(t)=
\lim_{n\to\infty}
\sum_{w=0}^\infty 
\chi(H^*(\mathfrak{ham}^0_{2n},\mathrm{Sp}(2n,\bR))_w)t^w \ ,
$$
which is the limit as $n\to\infty$ of the
formula given in Theorem~\ref{th:gf}.

Our computations so far imply
$$
c(t)=1+t^2+2t^4+3t^6+6t^8+\cdots \ .
$$
Recall here that the algebra $\mathcal{A}(\phi)$ is known
(see \cite{O2}) to be
a polynomial algebra whose numbers of generators
are $1, 1, 1, 2, 2, 3, \cdots $ in degrees $2,4,6,8,10,12,\cdots$
so that the generating function for this algebra is
$$
1+t^2+2t^4+3t^6+6t^8+9t^{10}+16 t^{12}+\cdots.
$$
It should be nice to know how these two generating
functions differ from each other.

\subsection*{Acknowledgements}
The authors would like to thank T.~Sakasai and M.~Suzuki
for help with computer computations using LiE and Mathematica. 
They would also like to thank T.~Tsuboi for information
about the thesis of Metoki \cite{Metoki}.

\bibliographystyle{amsplain}

\end{document}